\newtheorem{theorem}{Theorem}
\newtheorem{lemma}{Lemma}
\def\Dbar{\leavevmode\lower.6ex\hbox to 0pt{\hskip-.23ex \accent"16\hss}D}
\def\bR{{\mbox{\bf R}}}
\def\bZ{{\mbox{\bf Z}}}
\def\bC{{\mbox{\bf C}}}
\def\paf{{\mbox{\rm PAF}}}
\def\psd{{\mbox{\rm PSD}}}
\def\dft{{\mbox{\rm DFT}}}
\begin{document}

{\bf\LARGE
\begin{center}
There is no circulant weighing matrix of order 60 and
weight 36
\end{center}
}

\begin{abstract}
With the help of a computer, we prove the assertion made in the title. 
\end{abstract}

{\Large
\begin{center}
Dragomir {\v{Z}}. {\Dbar}okovi{\'c}\footnote{University of Waterloo,
Department of Pure Mathematics, Waterloo, Ontario, N2L 3G1, Canada
e-mail: \url{djokovic@math.uwaterloo.ca}}, Ilias S.
Kotsireas\footnote{Wilfrid Laurier University, Department of Physics
\& Computer Science, Waterloo, Ontario, N2L 3C5, Canada, e-mail:
\url{ikotsire@wlu.ca}}
\end{center}
}

\section{Introduction}

A {\em weighing matrix} of {\em order} $n$ and {\em weight} $w>0$ 
is a matrix $W$ of order $n$ whose entries belong to $\{0,\pm1\}$ 
such that $WW^T=wI$. (T denotes the transpose and $I$ the
identity matrix.) For more information on weighing matrices see 
\cite{CK:2007}. We use the symbol $CW(n,w)$ to refer to a weighing matrix of order $n$ and weight $w$ which is also a circulant. It is well known that if $W$ is a $CW(n,w)$ then 
$w=a^2$ where $a$ is the sum of the first row of $W$.
The existence question for $CW(60,36)$ was raised in Strassler's table \cite{Strassler} and has remained open for a long time \cite{AG:2010}. We shall prove that $CW(60,36)$ does not exist.

\section{Preliminaries}

Let $\bZ_n=\bZ/n\bZ=\{0,1,\ldots,n-1\}$ be the ring of integers modulo $n$. Let $A=[a_0,a_1,\ldots,a_{n-1}]$ be an integer sequence of length $n$. We view the indexes $0,1,\ldots,n-1$ as elements of $\bZ_n$. 
The {\em periodic autocorrelation function} of $A$ is the real-valued function $\paf_A:\bZ_n\to\bR$ defined by 
\begin{equation} \label{paf-def}
\paf_A(s)=\sum_{i=0}^{n-1} a_i a_{i+s}.
\end{equation}
(The indexes should be reduced modulo $n$.)

Let us now recall some basic facts about a $CW(n,w)$ matrix $W$. 
Let $A=[a_0,a_1,\ldots,a_{n-1}]$ be its first row and 
$a=\sum_i a_i$. As mentioned above we have $w=a^2$. By replacing $W$ with $-W$ if necessary, we may assume that $a>0$. 
It follows from $WW^T=wI$ that 
\begin{equation} \label{paf-eq}
\paf_A(s)=0, \quad s=1,2,\ldots,n-1.
\end{equation}
By abuse of language, we say that the periodic autocorrelation of $A$ is zero although $\paf_A(0)=w$ is not zero. The converse is also valid: if $A$ is a ternary sequence (i.e., a sequence with entries $0,\pm1$) of length $n$ and weight $w$ having zero periodic autocorrelation, then the corresponding circulant matrix $W$ is a $CW(n,w)$.

The {\em discrete Fourier transform} of $A$ is the complex-valued function $\dft_A:\bZ_n\to\bC$ defined by
\begin{equation} \label{dft-def}
\dft_A(s)=\sum_{j=0}^{n-1} a_j \omega^{js},
\end{equation}
where $\omega=e^{2\pi i/n}$. The {\em power spectral density} of $A$ is the real-valued function $\psd_A:\bZ_n\to\bR$ defined by 
\begin{equation} \label{psd-def}
\psd_A(s)=\left| \dft_A(s) \right|^2.
\end{equation}

We also need the notion of equivalence of integer sequences 
of fixed length, say $n$. Denote by $X_n$ the set of such 
sequences. Let $\bZ_n^*$ be the group of units (i.e., invertible elements) of $\bZ_n$. Let $\Phi_n$ be the group of affine transformations $\sigma:\bZ_n\to\bZ_n$ sending $i\mapsto ui+v$ where $u\in\bZ_n^*$, $v\in\bZ_n$. The correspondence 
$\sigma\leftrightarrow\left(\begin{array}{cc}u&v\\0&1\end{array}
\right)$ is an isomorphism of $\Phi_n$ with the group of these 
$2\times2$ matrices over $\bZ_n$. Thus, $\Phi_n$ is a semidirect product of $\bZ_n$ and $\bZ_n^*$. The action of 
$\Phi_n$ on $X_n$ is defined as follows. If $\sigma\in\Phi_n$ and $x=[x_0,x_1,\ldots,x_{n-1}]\in X_n$ then $\sigma(x)=
[x_{\sigma^{-1}(0)},x_{\sigma^{-1}(1)},\ldots,x_{\sigma^{-1}(n-1)}]$. We say that two sequences in $X_n$ are {\em equivalent} if they belong to the same orbit of $\Phi_n$.

The basic fact that we need (which is easy to verify) is the following: if a sequence $x\in X_n$ has a zero periodic autocorrelation, then the same is true for any sequence in the equivalence class of $x$.

Given a factorization $n=md$, we define the $m$-{\em compression map} $c_{n,d}:X_n\to X_d$ as the map sending a sequence 
$x=[x_0,x_1,\ldots,x_{n-1}]$ to the sequence 
$y=[y_0,y_1,\ldots,y_{d-1}]$, where 
$y_i=x_i+x_{i+d}+\cdots+x_{i+(m-1)d}$. It is known that if $x$ has zero periodic autocorrelation, then so does $y$ (see \cite{DK:JCD:2014}). The reduction map $\bZ_n^*\to\bZ_d^*$ modulo $d$ is a surjective group homomorphism, i.e., for each $u\in\bZ_d^*$ there exists $\tilde{u}\in\bZ_n^*$ such that $\tilde{u}=u \pmod{d}$. 

\begin{lemma} \label{dijagram}
Let $\sigma\in\Phi_d$ be given by $\sigma(j)=uj+v$ with $u\in\bZ_d^*$, $v\in\bZ_d$.
Let $\tilde{\sigma}\in\Phi_n$ be given by $\tilde{\sigma}(i)=
\tilde{u}i+\tilde{v}$, where $\tilde{u}=u \pmod{d}$ and $\tilde{v}=v \pmod{d}$.
Then the following diagram commutes

\begin{eqnarray} \label{diagram}
\begin{CD}
X_n @> c_{n,d} >> X_d \\
@V \tilde{\sigma}^{-1} VV @VV \sigma^{-1} V\\
X_n @> c_{n,d} >> X_d
\end{CD}
\end{eqnarray}
\end{lemma}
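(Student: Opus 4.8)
The plan is to verify the commutativity by a direct computation on each coordinate, after which everything reduces to one elementary fact about units. First I would reduce to a slightly cleaner statement. The inverse in $\Phi_n$ of the affine map $i \mapsto \tilde u i + \tilde v$ is $i \mapsto \tilde u^{-1} i - \tilde u^{-1}\tilde v$, and since $\tilde u^{-1} \equiv u^{-1}$ and $\tilde u^{-1}\tilde v \equiv u^{-1} v \pmod d$, the pair $(\tilde\sigma^{-1}, \sigma^{-1})$ has the same shape as the pair $(\tilde\sigma, \sigma)$ in the statement: a pair $(\tilde\tau, \tau)$ with $\tau \in \Phi_d$, $\tau(j) = aj+b$, and $\tilde\tau \in \Phi_n$, $\tilde\tau(i) = \tilde a i + \tilde b$, where $\tilde a \equiv a$ and $\tilde b \equiv b \pmod d$. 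So it is enough to prove $c_{n,d} \circ \tilde\tau = \tau \circ c_{n,d}$ for every such pair, and then specialize to $\tau = \sigma^{-1}$.

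Next, fix $x \in X_n$ and a coordinate $i \in \bZ_d$. Using $(\tau(z))_i = z_{\tau^{-1}(i)}$ together with the fact that $\tilde\tau^{-1}$ is affine with linear coefficient $\tilde a^{-1}$, a routine unwinding of the definition of $c_{n,d}$ yields
\[
\bigl(c_{n,d}(\tilde\tau(x))\bigr)_i = \sum_{k=0}^{m-1} x_{\tilde\tau^{-1}(i) + \tilde a^{-1} k d}, \quad \bigl(\tau(c_{n,d}(x))\bigr)_i = \sum_{k=0}^{m-1} x_{(\tau^{-1}(i) \bmod d) + k d},
\]
where the indices in the first sum are read in $\bZ_n$ while $\tau^{-1}(i)$ in the second is first reduced modulo $d$. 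The lemma thus reduces to the equality of these two $m$-term sums.

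The one step carrying real content is that $\{\tilde a^{-1} k d : 0 \le k < m\}$ and $\{k d : 0 \le k < m\}$ coincide as subsets of $\bZ_n$. This is because $\gcd(\tilde a, n) = 1$ forces $\gcd(\tilde a, m) = 1$ (as $m \mid n$), so multiplication by $\tilde a^{-1}$ is a bijection of the cyclic group $d\bZ_n \cong \bZ_m$. Hence the first sum equals $\sum_{k=0}^{m-1} x_{\tilde\tau^{-1}(i) + k d}$. Moreover $\tilde\tau^{-1}(i) \equiv \tau^{-1}(i) \pmod d$, because the affine coefficients of $\tilde\tau^{-1}$ reduce modulo $d$ to those of $\tau^{-1}$; so, writing $\tilde\tau^{-1}(i) = (\tau^{-1}(i) \bmod d) + \ell d$ in $\bZ_n$ and shifting the summation index $k \mapsto k - \ell \pmod m$ (which only permutes the block offsets $0, d, \ldots, (m-1)d$), the first sum becomes the second. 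This completes the argument.

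The computations are routine; the only thing to be careful about is the bookkeeping between the two reductions --- modulo $n$ for sequences in $X_n$ and modulo $d$ for sequences in $X_d$ --- and the inverse that the permutation action places on the index. I do not anticipate a genuine obstacle, since the statement is precisely the compatibility of \emph{rescaling the index by a unit} with \emph{folding the sequence into $m$ blocks of length $d$}, and that compatibility is the very reason the reduction $\bZ_n^* \to \bZ_d^*$ has the lifts used here.
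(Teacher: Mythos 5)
Your proposal is correct and follows essentially the same route as the paper: both unwind $c_{n,d}$ coordinatewise and reduce the claim to the fact that the two $m$-element index sets are the same coset of the subgroup $d\bZ_n$, using that a unit modulo $n$ permutes that subgroup and that the affine data agree modulo $d$. Your preliminary reduction from the inverse pair $(\tilde\sigma^{-1},\sigma^{-1})$ to a general compatible pair is a harmless reorganization, not a different argument.
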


\begin{proof}
Let $x=[x_i]_{i=0}^{n-1}\in X_n$ be arbitrary and let
$y=c_{n,d}(x)$, i.e., $y=[y_j]_{j=0}^{d-1}\in X_d$ where  $y_j=x_j+x_{j+d}+\cdots+x_{j+(m-1)d}$ for $j\in\bZ_d$. 
Let $x'=\tilde{\sigma}^{-1}(x)$ and $y'=\sigma^{-1}(y)$. Thus 
$x'_i=x_{\tilde{u}i+\tilde{v}}$ for $i\in\bZ_n$ and $y'_j=y_{uj+v}$ for $j\in\bZ_d$. We have to show that $y'=c_{n,d}(x')$, i.e.,
\begin{equation}
\sum_{k=0}^{m-1} x_{uj+v+kd}=
\sum_{k=0}^{m-1} x_{\tilde{u}(j+kd)+\tilde{v}},\quad j\in\bZ_d.
\end{equation}
Equivalently, we have to show that the two subsets of $\bZ_n$, 
namely, $\{ uj+v+kd:k\in\bZ_m \}$ and $\{ \tilde{u}j+\tilde{v}+
k\tilde{u}d:k\in\bZ_m \}$ are equal. As $\tilde{u}$ is relatively prime to $n$, both subsets are cosets of the additive subgroup of $\bZ_n$ generated by $d$. Since $\tilde{u}j+\tilde{v}=uj+v \pmod{d}$, the two  cosets are equal.
\end{proof}

\section{Main result}

We can now prove our main result. Our proof is computational and based on the compression method (see \cite{DK:JCD:2014}). All computations were performed on the saw.sharcnet.ca cluster running at 2.83 GHz.

\begin{theorem}
There is no circulant weighing matrix of order $60$ and 
weight $36$.
\end{theorem}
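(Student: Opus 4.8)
The plan is to rule out $CW(60,36)$ by a finite computer search organized around the compression maps $c_{60,d}$ introduced in the Preliminaries. Suppose for contradiction that $W$ is a $CW(60,36)$ with first row $A\in X_{60}$, a ternary sequence of weight $36$ with $\paf_A(s)=0$ for $s=1,\dots,59$ and $a=\sum_i a_i=6$. Since equivalence under $\Phi_{60}$ preserves the zero-periodic-autocorrelation property, I am free to normalize $A$ within its $\Phi_{60}$-orbit. The divisors of $60$ give several compressions; I would focus on the small ones, e.g. $d=2,3,4,5,6,10,12,15,20,30$, but the most powerful are the compressions to short lengths such as $d=4$, $d=5$, and $d=6$, where the compressed sequence $y=c_{60,d}(A)$ is an \emph{integer} (no longer ternary) sequence of length $d$, still with zero periodic autocorrelation, and with $\sum_j y_j=6$ and $\sum_j y_j^2 \le 36$ (the weight can only drop under compression, and the sum of squares equals $\frac1d\sum_s \psd_A(ds/\gcd)$-type quantities that are bounded by $w$).

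The key steps, in order, are: (1) For each chosen divisor $d$, enumerate all integer sequences $y$ of length $d$ that satisfy $\paf_y(s)=0$ for $s=1,\dots,d-1$, $\sum_j y_j=6$, and the entrywise bound $|y_j|\le m=60/d$ coming from the definition $y_i=x_i+x_{i+d}+\cdots$; this is a tiny finite set. (2) Discard those $y$ that are incompatible with arising as a compression of a ternary weight-$36$ sequence — here the $\psd$ condition helps: since $\psd_A(s)\ge 0$ for all $s$ and $\sum_s \psd_A(s)=60w=2160$, and since $\psd_y$ is a subsum of the $\psd_A$ values at the appropriate frequencies, one gets strong constraints; also the multiset of $|y_j|$ must be realizable by splitting $36$ ones/minus-ones into $d$ groups of size $\le m$. (3) Use Lemma \ref{dijagram}: the residual group $\Phi_{60}$-action that survives compression is exactly $\Phi_d$ acting on the compressed sequences, so it suffices to keep one representative $y$ per $\Phi_d$-orbit. (4) For each surviving orbit representative $y$, attempt to ``lift'' it: run a backtracking search over ternary $A\in X_{60}$ of weight $36$ with $c_{60,d}(A)=y$ and $\paf_A=0$, pruning aggressively with partial autocorrelation and $\psd$ tests and with simultaneous compression constraints at several divisors at once (a given $A$ must compress consistently at $d=4,5,6$, etc.). If every branch dies, we are done.

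The main obstacle I expect is step (4): even after compression the lifting search over $2^{?}$-sized subsets of $\bZ_{60}$ is potentially huge, so the whole proof hinges on making the pruning strong enough — in particular on intersecting the constraints coming from \emph{several} compressions simultaneously (the commuting-diagram Lemma is what guarantees these constraints are mutually consistent and lets us fix orbit representatives at each level without losing solutions), and on the $\psd$ nonnegativity/sum identities which sharply limit which compressed profiles are feasible. A secondary concern is correctness/reproducibility of the computation: I would want the enumeration at each divisor to be provably exhaustive (explicit bounds on the search space) and ideally independently re-checkable, reporting the sizes of the candidate sets at each stage so the reader can verify the combinatorics is finite and the claimed emptiness of the lift is meaningful. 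Assuming the combined pruning reduces the lifting problem to a feasible size, the search returns no $A$, contradicting the existence of $W$, and hence $CW(60,36)$ does not exist. \qed
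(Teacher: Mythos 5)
You have correctly identified the overall framework --- compression, the PSD identities, and the use of Lemma \ref{dijagram} to push the equivalence group down to the compressed level so that only one representative per $\Phi_d$-orbit needs to be lifted. This is indeed the skeleton of the paper's argument. However, there is a genuine gap: the plan is not yet a proof (no computation is carried out, no candidate sets are exhibited), and, more importantly, the parameter choice you favour would make the decisive step infeasible. You propose to concentrate on compressions to short lengths $d=4,5,6$, i.e.\ large compression factors $m=15,12,10$. The cost of the method is concentrated in the lifting step (your step (4)), and the fiber of $c_{60,d}$ over a compressed sequence $y$, within the ternary sequences, is the product over $j$ of the number of ternary $m$-tuples with prescribed sum $y_j$; already for $m=12$ a single group admits on the order of $10^{4}$--$10^{5}$ ternary fillings, so the fibers have cardinality well beyond $10^{20}$, and no concrete pruning scheme is given that would tame this. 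You yourself flag this as ``the main obstacle'' but leave it unresolved, so the argument does not close.

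The paper resolves exactly this tension by taking $d=20$ and $m=3$: the compressed alphabet is then only $\{0,\pm1,\pm2,\pm3\}$, the admissible contents are the $76$ nonnegative solutions of three explicit Diophantine equations (your sum and sum-of-squares constraints made exact: for a compressed sequence $B$ with zero autocorrelation one has $\sum_j b_j^2=\psd_B(1)=w=36$ exactly, not merely $\le 36$; note that your heuristic ``the weight can only drop under compression'' is false in general, e.g.\ $[1,1]$ compresses to $[2]$), the orbit representatives can be enumerated as charm bracelets, only four of them survive the zero-autocorrelation test, and the fibers over those four have sizes $6^{18}$ and $7^{16}$ --- large, but within reach of an exhaustive distributed search. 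In short, the two approaches rest on the same lemmas but place the combinatorial explosion differently; your placement puts it where it cannot be controlled, and without either adopting the paper's choice of $d$ and $m$ (or an equally effective one) and actually executing the enumeration and lifting computations, the nonexistence of $CW(60,36)$ is not established.
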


\begin{proof}
For convenience we set $n=60$ and $w=36$. Assume that there exists a $CW(n,w)$, say $W$. Let $A=[a_0,a_1,\ldots,a_{n-1}]$ be the first row of $W$ and we set $a=\sum_i a_i$.  Let $p,q,r$ denote the number of terms $a_i$ which are equal to $0,1,-1$, respectively. Since $a^2=w$ and we can assume that $a>0$, we have $q-r=a=6$. As $q+r=w$, we have $p=24$, $q=21$, and $r=15$.

We choose the factorization $n=md$ with $m=3$ and $d=20$. 
By compressing $A$ using the compression factor $m=3$, we obtain 
the sequence $B=[b_0,b_1,\ldots,b_{d-1}]$ where 
$b_i=a_i+a_{i+d}+a_{i+2d}$, $0\le i<d$. By \cite[Theorem 3]{DK:JCD:2014}, $B$ has also periodic autocorrelation zero. 
Note that each $b_i$ belongs to the set $Z=\{0,\pm1,\pm2,\pm3\}$. For any sequence  $[x_0,\ldots,x_{d-1}]$ with $x_i\in Z$ we define its {\em content} to be the sequence $\mu=[\mu_0,\mu_1,\mu_{-1},\mu_2,\mu_{-2},\mu_3,\mu_{-3}]$, where $\mu_j$ is the number of indexes $i$ such that $x_i=j$.  For convenience, we also set $\nu_0=\mu_0$ and $\nu_i=\mu_i+\mu_{-i}$ for $i=1,2,3$. 
For a fixed content $\mu$, we denote by $X_\mu$ the subset of $X_d$ consisting of all sequences having the content $\mu$. Note that equivalent sequences have the same content, and so $X_\mu$ is a union of equivalence classes.

Let $\mu^B$ denote the content of $B$. Let us also set $\nu_0^B=\mu_0^B$ and $\nu_i^B=\mu_i^B+\mu_{-i}^B$ for $i=1,2,3$. Obviously we have $\sum_i \nu_i^B=d$, i.e., the $\nu_i^B$ satisfy the equation
\begin{equation} \label{prva-eq}
\nu_0+\nu_1+\nu_2+\nu_3=20.
\end{equation}

Since $A$ has zero periodic autocorrelation, by \cite[Theorem 2]{DK:JCD:2014} we have
\begin{equation} \label{psd-eq}
\psd_A(s)=w=36, \quad s=0,1,2,\ldots,n-1.
\end{equation}
By \cite[Theorem 3]{DK:JCD:2014} we also have
\begin{equation} \label{psd-eq-c}
\psd_B(s)=w=36, \quad s=0,1,2,\ldots,d-1.
\end{equation}
By setting $s=1$ and $\omega_0=e^{2\pi i/d}$ and by using the fact that $B$ has zero periodic autocorrelation, we obtain that
\begin{eqnarray*}
\psd_B(1) &=& \left| \sum_{j=0}^{d-1} b_j \omega_0^j \right|^2 
= \sum_{j=0}^{d-1} \sum_{k=0}^{d-1} b_jb_k\omega_0^{j-k} \\
&=& \sum_{l=0}^{d-1} \left( \sum_{k=0}^{d-1} b_k b_{k+l} \right) 
\omega_0^l 
= \sum_{k=0}^{d-1} b_k^2.
\end{eqnarray*}
Thus $\sum_i i^2\nu_i^B=w$, i.e., the $\nu_i^B$ satisfy the equation
\begin{equation} \label{druga-eq}
\nu_1+4\nu_2+9\nu_3=36.
\end{equation}

Consequently, $\mu^B$ is a nonnegative integral solution of the following system of three linear Diophantine equations:
\begin{eqnarray} 
\mu_0+\mu_1+\mu_{-1}+\mu_2+\mu_{-2}+\mu_3+\mu_{-3} &=&                                    20, \label{prva} \\
(\mu_1-\mu_{-1})+2(\mu_2-\mu_{-2})+3(\mu_3-\mu_{-3}) &=& 6, 
\label{druga} \\
(\mu_1+\mu_{-1})+4(\mu_2+\mu_{-2})+9(\mu_3+\mu_{-3}) &=& 36.
\label{treca}
\end{eqnarray}

By a straightforward computer enumeration, we found that there are exactly 76 different contents $\mu$ satisfying this system of equations. They are listed in the appendix. Hence, $\mu^B$ must be one of these 76 contents. In order to show that the zero autocorrelation property is violated for all sequences in some $X_\mu$, it suffices to do that for the representatives of the equivalence classes contained in $X_\mu$. As a representative of an equivalence class we choose the sequence in the class which is the smallest in the lexicographic ordering. We consider the integers in $Z$ as denoting 7 different colors, and we order them so that $0<1<-1<2<-2<3<-3$. (The above-mentioned lexicographic ordering uses this ordering of the colors.) We refer to these representatives as {\em charm bracelets} with content $\mu$. An algorithm for generating the charm bracelets of fixed content is presented in the recent preprint \cite{DKRS}. We have applied this algorithm to each of the 76 possible contents $\mu$. The total cpu time used for these computations was about 13 hours and 10 minutes. 

The upshot is that altogether there are only four charm bracelets with zero autocorrelation function, namely the following:
\begin{eqnarray*} 
B_1 &=& [1,1,1,1,-1,1,-1,-1,3,1,-1,-1,-1,-1,1,-1,1,1,3,-1],\\
B_2 &=& [0,0,0,0,0,0,0,0,3,3,0,0,0,0,0,0,0,0,3,-3],\\
B_3 &=& [0,0,0,0,0,0,0,3,0,3,0,0,0,0,0,0,0,3,0,-3],\\
B_4 &=& [0,0,0,0,0,0,3,0,0,3,0,0,0,0,0,0,3,0,0,-3].
\end{eqnarray*}
The charm bracelet $B_1$ has content $[0,9,9,0,0,2,0]$, while 
$B_2,B_3$ and $B_4$ have the same content, namely $[16,0,0,0,0,3,1]$. Thus, $B$ must be equivalent to one of the $B_k$s, i.e., there exist $\sigma\in\Phi_d$ and $k\in\{1,2,3,4\}$ such that $\sigma^{-1}(B)=B_k$. Say, $\sigma(j)=uj+v$ for all $j\in\bZ_d$, where $u\in\bZ_d^*$ and $v\in\bZ_d$ are fixed.
Choose $\tilde{u}\in\bZ_n^*$ and $\tilde{v}\in\bZ_n$ such that $\tilde{u}=u \pmod{d}$ and $\tilde{v}=v \pmod{d}$. Define $\tilde{\sigma}\in\Phi_n$ by $\tilde{\sigma}(i)=\tilde{u}i+\tilde{v}$. If $A_k=(\tilde\sigma)^{-1}(A)$ then $c_{n,d}(A_k)=B_k$ by Lemma \ref{dijagram}. Thus, by replacing $A$ with $A_k$ we may assume that $B=B_k$.

There are exactly $6^{18}=101559956668416$ ternary sequences in $X_n$ whose $m$-compression is $B_1$. Also, there are exactly  
$7^{16}=33232930569601$ ternary sequences in $X_n$ whose $m$-compression is $B_k$, $k=2,3,4$. We have checked with a computer that none of the mentioned ternary sequences has zero periodic autocorrelation. The computation was carried out separately for each of the $B_k$s. 
For $k=1$ we divided the task to 36 processors, so that each processor had to check $6^{16}=2821109907456$ sequences. The total cpu time used by them was $1144.8$ hours. For $k>1$ we divided the task to 49 processors, so that each processor had to check $7^{14}=678223072849$ sequences. The total time used was $383.6$, $378.5$, $382.0$ hours for $k=2,3,4$, respectively.

Since there are no ternary sequences $x\in X_n$ with zero periodic autocorrelation such that $c_{n,d}(x)$ is one of the 
sequences $B_1,B_2,B_3,B_4$, we have a contradiction. 
Hence, we conclude that there are no $CW(60,36)$.
\end{proof}

\section{Acknowledgements}
The authors wish to acknowledge generous support by NSERC.
This work was made possible by the facilities of the Shared Hierarchical Academic Research Computing Network (SHARCNET) and Compute/Calcul Canada. We also thank Dan Recoskie and Joe Sawada 
for the use of their code for generating the charm bracelets.

\section*{Appendix: The 76 contents $\mu$}

We list all 76 nonnegative integral solutions of the system 
of three linear Diophantine equations (\ref{prva})-(\ref{treca}).

$$
\begin{array}{lll}
0,12,6,0,0,1,1 & 0,15,3,0,0,0,2 & 0,9,9,0,0,2,0 \\ 
10,0,3,6,0,0,1 & 10,1,2,4,2,1,0 & 10,2,1,5,1,0,1 \\
10,3,0,3,3,1,0 & 11,0,0,6,3,0,0 & 11,0,5,1,0,3,0 \\
1,10,5,1,2,1,0 & 1,11,4,2,1,0,1 & 11,2,3,0,1,3,0 \\
1,12,3,0,3,1,0 & 11,3,2,1,0,2,1 & 1,13,2,1,2,0,1 \\ 
11,5,0,0,1,2,1 & 1,15,0,0,3,0,1 & 12,0,2,4,0,1,1 \\
12,1,1,2,2,2,0 & 12,2,0,3,1,1,1 & 14,0,1,2,0,2,1 \\
14,1,0,0,2,3,0 & 16,0,0,0,0,3,1 & 1,6,9,3,0,1,0 \\
1,8,7,2,1,1,0 & 1,9,6,3,0,0,1 & 2,11,1,2,4,0,0 \\
2,3,9,6,0,0,0 & 2,5,7,5,1,0,0 & 2,7,5,4,2,0,0 \\
2,9,3,3,3,0,0 & 3,11,3,0,1,1,1 & 3,12,2,1,0,0,2 \\
3,14,0,0,1,0,2 & 3,6,8,1,0,2,0 & 3,8,6,0,1,2,0 \\
3,9,5,1,0,1,1 & 4,10,1,2,2,0,1 & 4,11,0,0,4,1,0 \\
4,3,8,4,0,1,0 & 4,5,6,3,1,1,0 & 4,6,5,4,0,0,1 \\
4,7,4,2,2,1,0 & 4,8,3,3,1,0,1 & 4,9,2,1,3,1,0 \\
5,0,8,7,0,0,0 & 5,2,6,6,1,0,0 & 5,4,4,5,2,0,0 \\
5,6,2,4,3,0,0 & 5,8,0,3,4,0,0 & 6,10,0,0,2,1,1 \\
6,3,7,2,0,2,0 & 6,5,5,1,1,2,0 & 6,6,4,2,0,1,1 \\
6,7,3,0,2,2,0 & 6,8,2,1,1,1,1 & 6,9,1,2,0,0,2 \\
7,0,7,5,0,1,0 & 7,2,5,4,1,1,0 & 7,3,4,5,0,0,1 \\
7,4,3,3,2,1,0 & 7,5,2,4,1,0,1 & 7,6,1,2,3,1,0 \\
7,7,0,3,2,0,1 & 8,1,3,6,2,0,0 & 8,3,1,5,3,0,0 \\
8,3,6,0,0,3,0 & 8,6,3,0,0,2,1 & 8,9,0,0,0,1,2 \\
9,0,6,3,0,2,0 & 9,2,4,2,1,2,0 & 9,3,3,3,0,1,1 \\
9,4,2,1,2,2,0 & 9,5,1,2,1,1,1 & 9,6,0,0,3,2,0 \\
9,6,0,3,0,0,2 &               &
\end{array}
$$

\end{document}